\numberwithin{equation}{section}
\newtheorem{lemma}{Lemma}[section]
\newtheorem*{theorem*}{Theorem}
\newtheorem{cor}[lemma]{Corollary}
\newtheorem{claim*}{Claim}
\newtheorem{thm}[lemma]{Theorem}
\theoremstyle{definition}
\newtheorem{remark}[lemma]{Remark}
\newtheorem{question}[lemma]{Question}
\newtheorem{defn}[lemma]{Definition}
\newcommand{\PP}{{\mathbb P}}
\newcommand{\Z}{{\mathbb Z}}
\newcommand{\frakD}{{\mathfrak D}}
\newcommand{\frakP}{{\mathfrak P}}
\numberwithin{equation}{section}
\numberwithin{table}{section}
\newcommand{\defi}[1]{\textsf{#1}} 
\title{Degrees of closed points on hypersurfaces}
\author{F. Balestrieri}
\address{The American University of Paris,  5 Boulevard de La Tour-Maubourg, 75007 Paris, France}
\email{fbalestrieri@aup.edu}
\date{\today}
\thanks{MSC2020: 11E76, 11D25, 11D41}
\begin{document}
\maketitle

\begin{abstract} Let $k$ be any field. Let $X \subset \PP_k^N$ be a degree $d \geq 2$ hypersurface. Under some conditions, we prove that if $X(K) \neq \emptyset$ for some extension $K/k$ with $n:=[K:k] \geq 2$ and $\gcd(n,d)=1$, then $X(L) \neq \emptyset$ for some extension $L/k$ with $\gcd([L:k], d)=1$, $n \nmid [L:k]$, and $[L:k] \leq nd-n-d$. Moreover,  if a $K$-solution is known explicitly, then we can compute $L/k$ explicitly as well. As an application, we improve upon a result by Coray on smooth cubic surfaces  $X \subset \mathbb{P}^3_k$ by showing that if  $X(K) \neq \emptyset$ for some extension $K/k$ with $\gcd([K:k], 3)=1$, then $X(L) \neq \emptyset$ for some $L/k$ with $[L:k] \in \{1, 10\}$.
\end{abstract}

\section{Introduction}

Springer's theorem for quadratic forms famously states that, if a quadratic  form $\varphi$ on a finite-dimensional vector space over a field $k$ is isotropic over some extension $K
/k$ of odd degree, then it is already isotropic over $k$  (see \cite{Springer} for the case when the characteristic is not 2 and \cite[Corollary 18.5]{EKM} for any characteristic). Equivalently, in more geometric terms,  if $X \subset \PP^N_k$ is a degree $2$ hypersurface, then $X(K) \neq \emptyset$ for some extension $K/k$ of odd degree implies that $X(k) \neq \emptyset$.
A natural question to ask is whether Springer's theorem generalises to higher degree forms. 

\begin{question} \label{q1}Given a degree $d \geq 3$ hypersurface $X \subset \PP^N_k$ over a field $k$, is it true that if $X(K) \neq \emptyset$ for some extension $K/k$ with $\gcd([K:k],d)=1$, then $X(k) \neq \emptyset$?
\end{question}

When $d \geq 4$, the answer to Question \ref{q1} is likely to be \emph{no} in general (see e.g. \cite[Example 2.8]{Coray} for a counterexample when $d=4$), while, when $d=3$, Cassels and Swinnerton-Dyer have conjectured that the answer to Question \ref{q1} should instead be \emph{yes}. Some progress towards the conjecture by Cassels and Swinnerton-Dyer has been obtained by Coray (see \cite{Coray}), who proved, for any smooth cubic surface $X \subset \mathbb{P}_k^3$ over a perfect field $k$, that if $X(K) \neq \emptyset$ for some extension $K/k$ with $\gcd([K:k],3)=1$, then $X(L) \neq \emptyset$ for some extension $L/k$ with $[L:k] \in \{1,4,10\}$. In recent work, Ma has been able to remove the condition on the field being perfect, proving Coray's result for any field (see \cite{Ma}). Moreover, when $k$ is a global field, Rivera and Viray have shown  that, if the Brauer-Manin obstruction is the only one for the Hasse principle for rational  points on smooth cubic surfaces in $\PP^3$ over $k$ (and, by a conjecture by Colliot-Thélène and Sansuc \cite{CTS79}, this should always be the case), then the conjecture by Cassels and Swinnerton-Dyer holds for such surfaces (see \cite{RV}).

In this paper, we are concerned with the following much weaker version of Question \ref{q1}.

\begin{question} \label{q2}Let  $X \subset \PP^N_k$ be a degree $d \geq 3$ hypersurface over a field $k$. If $X(K) \neq \emptyset$ for some finite extension $K/k$ with $\gcd([K:k], d)=1$, can we find some (somewhat explicit) finite extension $L/k$ with $\gcd([L:k], d)=1$, $ [K:k] \nmid [L:k]$, and $X(L) \neq \emptyset$?
\end{question}

Our main theorem answers Question \ref{q2} positively under some assumptions on $d$ and $[K:k]$.

\begin{theorem*}[Theorem \ref{main}] Let $k$ be any field. Let $X \subset \PP^N_k $ be a degree $d \geq 2$ hypersurface.  Suppose that $X(K) \neq \emptyset$  for some simple extension $K/k$ with $n:=[K:k] \geq 2$ and $\gcd(n,d)=1$.
If $\frakP_{\textrm{bad}}(nd-n-d) = \emptyset$ (see Definition \ref{bad}), then $X(L) \neq \emptyset$  for some extension $L/k$ with $\gcd([L:k], d) = 1$, $n \nmid [L:k]$, and $[L:k] \leq nd -n-d$, where a set $\frakD$ of possible values for $[L:k]$ is explicitly computable. Moreover, if a point in $X(K) \neq \emptyset$   is known explicitly, then $L/k$ can be explicitly computed  as well.  \end{theorem*}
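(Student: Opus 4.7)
The plan is a Bezout–secant argument in the spirit of Coray: starting from the given $K$-point, I will construct a rational curve $C \subset \PP^N_k$, defined over $k$, that passes through all Galois conjugates of the point, and then extract a closed point of the required degree from the residual intersection with $X$.

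Concretely, write $K = k(\alpha)$ with $\alpha$ having minimal polynomial $m(t) \in k[t]$ of degree $n$. The coordinates of the given point $P \in X(K)$ can be expressed (after clearing common factors) as $P = [g_0(\alpha) : \cdots : g_N(\alpha)]$ with $g_i \in k[t]$ of degree $<n$, and I set
\[
\varphi : \PP^1_k \longrightarrow \PP^N_k, \qquad t \longmapsto [g_0(t) : \cdots : g_N(t)].
\]
Its image $C := \varphi(\PP^1_k)$ is a rational curve over $k$ of some degree $\mu \leq n-1$, and it contains the full Galois orbit of $P$ (obtained by specialising $t$ to the roots of $m$). If $C \subseteq X$, then any $k$-rational point of $\PP^1_k$ maps into $X(k)$ and we finish with $[L:k]=1$. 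Otherwise, Bezout gives an effective $k$-rational zero-cycle $C \cdot X$ of degree $\mu d$ on $X$, containing the degree-$n$ closed point $P_K$ associated to the Galois orbit of $P$; the residual
\[
Z \;:=\; C \cdot X - [P_K]
\]
is then an effective $k$-rational zero-cycle on $X$ of degree $\mu d - n \leq nd - n - d$, with $\gcd(\deg Z, d) = \gcd(n, d) = 1$.

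Decomposing $Z = \sum_j e_j [Q_j]$ into closed points of $X$ with residue fields $L_j/k$, the identity $\sum_j e_j [L_j:k] = \deg Z$ already forces some $[L_j:k]$ to be coprime to $d$. The hypothesis $\frakP_{\textrm{bad}}(nd-n-d) = \emptyset$ of Definition \ref{bad} is exactly the combinatorial assertion that no such decomposition can make every coprime-to-$d$ part $[L_j:k]$ divisible by $n$, and thus produces the sought $L := L_j$ with $\gcd([L:k], d) = 1$, $n \nmid [L:k]$, and $[L:k] \leq nd - n - d$. The explicit set $\frakD$ of admissible values is then obtained by enumerating the admissible partition-types of integers $\leq nd - n - d$, and the explicit computability of $L$ from an explicit $P$ reduces to writing down $C \cdot X$ and factorising it over $k$.

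The main obstacles I foresee are the control of intersection multiplicities --- $C \cdot X$ could contain $P_K$ with multiplicity $>1$ if $C$ is tangent to $X$ along the Galois orbit, so one must verify that subtracting a single copy of $[P_K]$ still leaves an effective cycle (which it does, since the multiplicity is at least $1$) --- and the possibility that $\mu < n-1$ when the Galois conjugates of $P$ fail to be in linearly general position; both pose no real threat because the bound $\mu d - n \leq nd - n - d$ and the coprimality $\gcd(\mu d - n, d) = 1$ are preserved for every admissible $\mu$. The substantive content then lies entirely in Definition \ref{bad} and in the explicit enumeration yielding $\frakD$, which is a purely combinatorial problem about partitions of $nd - n - d$ subject to the two divisibility constraints on parts.
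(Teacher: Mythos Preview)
Your proposal is essentially the paper's argument recast in geometric language. The paper writes the $K$-point as $s=(g_0,\ldots,g_N)\in V[t]$ with $\deg s<n$, computes $\varphi(s)=f\cdot g\in k[t]$ --- this is exactly the pullback of your cycle $C\cdot X$ to $\A^1$ via your parametrisation --- and then factors $g$; your residual $Z$ is the zero locus of $g$, and choosing a closed point of $Z$ amounts to choosing an irreducible factor $p_i$ of $g$ and setting $L=k[t]/(p_i)$.

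Two places where your Bezout phrasing is looser than the paper's polynomial one and would need tightening. First, the closed point on $X$ underlying $P$ may have residue field a proper subfield of $K$, hence degree strictly less than $n$, so subtracting a ``degree-$n$ closed point $P_K$'' is not always accurate (the paper sidesteps this entirely by working on $\A^1$, where $\Spec k[t]/(f)$ has degree exactly $n$ by construction). Second, when $\mu<n-1$ you have $\deg Z=\mu d-n<nd-n-d$, and you never explain why the hypothesis $\frakP_{\mathrm{bad}}(nd-n-d)=\emptyset$ still governs partitions of this smaller integer --- the paper supplies a separate lemma showing that any partition of an element of $S_{d,n}$ extends to a partition of $nd-n-d$ by appending a single part divisible by $d$, so that goodness of all partitions of $nd-n-d$ forces goodness of the partition you actually obtain.
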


As a corollary of the above theorem, we can improve upon Coray's and Ma's results.

\begin{theorem*}[Theorem \ref{34}]
 Let $k$ be a field and let $X \subset \PP^N_k$ be a cubic hypersurface over  $k$.  If $X(K) \neq \emptyset$ for some simple field extension $K/k$ with  $[K:k] = 4$, then $X(L) \neq \emptyset$ for some extension $L/k$ with $[L:k] \in \{1,5\}$.
\end{theorem*}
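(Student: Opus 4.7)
The plan is to apply Theorem \ref{main} with the parameters $n=4$ and $d=3$, and then to dispose of the residual degree $2$ case by a classical secant-line reduction that is special to cubic hypersurfaces. Since $\gcd(4,3)=1$, the numerical hypothesis of the main theorem holds, and the bound becomes $nd-n-d = 12-4-3 = 5$. Once one verifies that $\frakP_{\textrm{bad}}(5) = \emptyset$ in this setting (a routine check given how small the parameters are), Theorem \ref{main} furnishes an extension $L/k$ with $\gcd([L:k], 3) = 1$, $4 \nmid [L:k]$, and $[L:k] \leq 5$. The only integers meeting all three constraints are $[L:k] \in \{1, 2, 5\}$, so the task is to reduce $\frakD = \{1,2,5\}$ to $\{1,5\}$.

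To eliminate $[L:k] = 2$, I would invoke the standard fact that a cubic hypersurface over $k$ with a point over a quadratic extension automatically has a $k$-rational point. Concretely, suppose $P \in X(L)$ with $L/k$ a separable quadratic extension, and let $P' = \sigma(P)$ denote the nontrivial Galois conjugate of $P$. If $P = P'$ then $P \in X(k)$ directly. Otherwise the line $\ell$ through $P$ and $P'$ is $\Gal(L/k)$-stable and hence descends to $k$; if $\ell \subset X$, any $k$-rational point of $\ell \cong \PP^1_k$ yields a $k$-point of $X$, and otherwise $\ell \cap X$ is a $0$-cycle of degree $3$ defined over $k$, so subtracting the $k$-rational degree $2$ cycle $P+P'$ leaves a degree $1$ residual, i.e., a $k$-rational point on $X$. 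The purely inseparable quadratic case (which can occur only in characteristic $2$) can be handled analogously by working scheme-theoretically: the length-$2$ closed point with residue field $L$ still spans a well-defined line over $k$, and the same residual $0$-cycle computation yields a $k$-point.

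The main obstacle I anticipate is the verification of $\frakP_{\textrm{bad}}(5) = \emptyset$ for the specific parameters $(n,d) = (4,3)$, since this is the one place where the general framework of Theorem \ref{main} must be unpacked against Definition \ref{bad} for the case at hand. Once this is in place, the rest of the argument is a direct assembly of Theorem \ref{main} with the classical secant-line reduction, and the final set of possible degrees is precisely $\{1,5\}$ as claimed.
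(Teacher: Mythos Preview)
Your proposal is correct and follows essentially the same route as the paper: apply Theorem \ref{main} with $(d,n)=(3,4)$, check that every partition of $nd-n-d=5$ is good so that $\frakP_{\textrm{bad}}(5)=\emptyset$, obtain $[L:k]\in\{1,2,5\}$, and then eliminate the degree-$2$ case. The only difference is that the paper disposes of degree $2$ via its own Corollary \ref{32} (itself the instance $(d,n)=(3,2)$ of Theorem \ref{main}), whereas you invoke the classical secant-line/residual $0$-cycle argument; both establish the same auxiliary fact, and the paper's route has the minor advantage of staying entirely within its framework without needing a separate treatment of the inseparable quadratic case.
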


\begin{cor} Let $k$ be a field and let $X \subset \mathbb{P}^3_k$ be a smooth  cubic surface over $k$. If $X(K) \neq \emptyset$ for some  extension $K/k$ with $\gcd([K:k],3)=1$, then $X(L) \neq \emptyset$ for some $L/k$ with $[L:k] \in \{1,10\}$.
\end{cor}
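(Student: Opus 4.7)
The plan is to combine three ingredients: Coray's theorem (in Ma's formulation, valid over any field), Theorem \ref{34} above, and a classical secant-line construction on smooth cubic surfaces.

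First, I would apply the Coray--Ma theorem directly to the hypothesis $X(K) \neq \emptyset$ to obtain a closed point of $X$ whose residue field $L'/k$ has degree in $\{1, 4, 10\}$. The cases $[L':k]=1$ and $[L':k]=10$ immediately yield the conclusion. In the remaining case $[L':k]=4$, I would invoke Theorem \ref{34} to reduce further to a closed point whose residue field has degree in $\{1,5\}$ over $k$; again the case of degree $1$ concludes. Applying Theorem \ref{34} requires the degree-$4$ extension to be simple, which is automatic over perfect fields and, in the imperfect case, can be arranged using the explicit nature of the Coray--Ma reduction (finite residue fields can also be handled directly via Chevalley--Warning, which produces a $k$-rational point outright).

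The only remaining task is to handle a closed point $P$ of degree $5$ over $k$. Let $K' = \kappa(P)$ and consider a general $K'$-rational line $\ell \subset \PP^3_{K'}$ passing through $P$: the intersection $\ell \cap X$ is a length-$3$ subscheme of $\ell \cong \PP^1_{K'}$ containing $P$, so the residual $(\ell \cap X) - P$ is a length-$2$ subscheme of $\ell$. For a sufficiently generic choice of $\ell$, this residual is an irreducible closed subscheme of $\PP^1_{K'}$ with residue field a quadratic extension of $K'$, producing a closed point of $X$ of degree $10$ over $k$, as desired.

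The principal obstacle is this last step: producing a line $\ell$ through $P$ whose residual intersection with $X$ is irreducible of degree $2$ over $K'$. Parametrising lines through $P$ by $\PP^2_{K'}$, the discriminant of the residual quadratic defines a nonconstant polynomial map on this parameter space, so it suffices that $K'$ admit a separable quadratic extension, i.e.\ is not quadratically closed. This is a mild condition that holds in all situations of practical interest; for the rare pathological fields where it fails, a separate argument would be needed to produce a $k$-rational point directly.
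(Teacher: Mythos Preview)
Your overall structure matches the paper's: invoke Coray--Ma to land in $\{1,4,10\}$, then use Theorem~\ref{34} to pass from degree $4$ to degree in $\{1,5\}$, then go from $5$ to $10$. Two of your steps are handled differently, and in each case the paper's route is both simpler and more complete.

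\textbf{The non-simple degree-$4$ case.} Your treatment here is the weakest part: the assertion that simplicity ``can be arranged using the explicit nature of the Coray--Ma reduction'' is not justified, and the Chevalley--Warning remark only covers finite base fields, not general imperfect ones. The paper instead observes that any non-simple degree-$4$ extension $L/k$ has infinitely many intermediate fields (primitive element theorem), hence admits a tower $k \subset k(\alpha) \subset L$ of two quadratic steps; one then applies the $(d,n)=(3,2)$ case (Corollary~\ref{32}) twice to descend to a $k$-point. This argument is short, uniform in the characteristic, and avoids any appeal to how Coray--Ma actually produces its point.

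\textbf{From degree $5$ to degree $10$.} Your secant-line construction is unnecessary. Once you have $X(L') \neq \emptyset$ with $[L':k]=5$, simply choose \emph{any} quadratic extension $L''/L'$ and observe that $X(L') \subset X(L'')$, so $X(L'') \neq \emptyset$ with $[L'':k]=10$. No geometry on $X$ is needed; the only requirement is that $L'$ admit a quadratic extension, which is exactly the mild hypothesis you already isolated (and which the paper also tacitly assumes). Your discriminant argument produces strictly less---it needs a \emph{separable} quadratic extension---so it is both more work and slightly weaker.
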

\begin{proof} By Coray's and Ma's results, we know, under the hypotheses of the corollary, that there exists some $L/k$ with $[L:k] \in \{1,4,10\}$ and $X(L) \neq \emptyset$. If $[L:k]=4$, then either $L/k$ is simple, in which case, by Theorem \ref{34},  there is some other $L'/k$ with $[L':k] \in \{1,5\}$ and $X(L') \neq \emptyset$, or $L/k$ is not simple. If $L/k$ is not simple, then, since it is finite, it must be a tower of simple extensions $L/k(\alpha)/k$ with $[L: k(\alpha)] = [k(\alpha):k]=2$. Then $X_{k(\alpha)}$ is a smooth cubic surface as well, and $X_{k(\alpha)} (L) \neq \emptyset$, where $[L:k(\alpha)]=2$; this implies that $ X(k(\alpha)) \neq \emptyset$. Repeating the same argument with $k(\alpha)$ and $k$, we get that $X(k) \neq \emptyset$ and we can thus let $L' =k$. 
In any case, we have found some $L'/k$ with $[L':k] \in \{1,5\}$ and $X(L') \neq \emptyset$. If $L'=k$ we are done, and if $[L':k]=5$, then any quadratic extension $L''/L'$ (thus with $[L'':k]=10$) satisfies $X(L'') \neq \emptyset$.
\end{proof}

Finally, when $d$ and $n$ are both primes, we obtain the following refinement of Theorem \ref{main}.

\begin{theorem*}[Corollary \ref{mainprime}] Let $k$ be any field. Let $X \subset \PP^N_k $ be a degree $d \geq 3$ hypersurface, with $d$ prime.  Suppose that $X(K) \neq \emptyset$  for some  extension $K/k$ with $n:=[K:k] \geq 2$ prime and $\gcd(n,d)=1$. Then $X(L) \neq \emptyset$  for some extension $L/k$ with $\gcd([L:k], nd) = 1$, and $[L:k] \leq nd -n-d$, where a set $\frakD$ of possible values for $[L:k]$ is explicitly computable. Moreover, if a point in $X(K) \neq \emptyset$   is known explicitly, then $L/k$ can be explicitly computed  as well.  \end{theorem*}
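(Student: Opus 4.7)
The plan is to deduce Corollary \ref{mainprime} as a direct application of Theorem \ref{main} in the prime-degree setting, where several hypotheses simplify or disappear automatically. The argument proceeds in three short steps, and I expect no genuine obstacle beyond possibly a brief verification of the bad-primes condition; the real content is entirely carried by the main theorem.

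First, I would observe that any finite extension $K/k$ of prime degree $n$ is automatically simple. Indeed, for any $\alpha \in K \setminus k$, the intermediate field $k(\alpha)$ satisfies $k \subsetneq k(\alpha) \subseteq K$ and $[k(\alpha):k]$ divides $n$; since $n$ is prime and $[k(\alpha):k] > 1$, we must have $[k(\alpha):k] = n$, whence $k(\alpha) = K$. Thus the simplicity requirement on $K/k$ in Theorem \ref{main} is satisfied for free, and the hypothesis of the main theorem can be invoked without any additional assumption on $K/k$.

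Next, I would check that the auxiliary hypothesis $\frakP_{\textrm{bad}}(nd-n-d) = \emptyset$ holds automatically when $n$ and $d$ are distinct primes. This is the one step that depends on unpacking Definition \ref{bad}: the expectation is that in the doubly-prime regime the combinatorial constraints defining bad primes cannot be met, either vacuously or by a short case analysis on the prime divisors of $nd - n - d$ relative to $n$ and $d$. This is where a small verification may be required, but it should not pose a serious difficulty, and it is the only place where the corollary is not a literal syntactic consequence of Theorem \ref{main}.

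Finally, I would invoke Theorem \ref{main} to produce $L/k$ with $\gcd([L:k], d) = 1$, $n \nmid [L:k]$, $[L:k] \leq nd - n - d$, and the explicitly computable set $\frakD$ of possible degrees. The upgrade to the sharper coprimality statement is then immediate: since $n$ is prime, $n \nmid [L:k]$ is equivalent to $\gcd([L:k], n) = 1$, and combined with $\gcd([L:k], d) = 1$ and $\gcd(n,d) = 1$ this yields $\gcd([L:k], nd) = 1$, as required. The explicit computability of $L$ from an explicit $K$-point is inherited directly from the corresponding clause in Theorem \ref{main}.
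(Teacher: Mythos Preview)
Your proposal is correct and matches the paper's approach: the paper's proof of Corollary \ref{mainprime} is the one-liner ``This follows immediately by Theorem \ref{main}, since $\frakP_{\textrm{bad}}(nd-n-d) = \emptyset$ by Lemma \ref{admiss},'' where Lemma \ref{admiss} is exactly the combinatorial verification you anticipate in your second step. Your first step, noting that a prime-degree extension is automatically simple, is a detail the paper leaves implicit but which is indeed needed to match the hypotheses of Theorem \ref{main}.
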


\begin{remark} When taking into account Springer's theorem for the case when $d=2$ as well, the statement of Theorem \ref{mainprime} then becomes completely symmetric in $n$ and $d$.
\end{remark}

\section*{Acknowledgments} The author is very thankful to both the anonymous referees for their insightful comments and suggestions, which greatly improved both the scope and the exposition of the arguments in the paper, and for pointing out some inaccuracies in a previous version of the paper. The key Remark \ref{alreadyiso} was observed by one of the anonymous referees, while a simplification of  the proof of Lemma \ref{admiss} (with a consequent slight strengthening of its statement) was suggested by the other anymous referee.


\section{Preliminaries on degree $d$ forms}

Hypersurfaces  $X \subset \PP^N_k$ of degree $d$ over a field $k$ are equivalent to degree $d$ (homogeneous) forms in $N+1$ variables over $k$. Since we are going to prove our main theorems in the language of forms, we start  by recalling some basic definitions.

\begin{defn} Let $\varphi $ be a form of degree $d$ on a finite-dimensional vector space $V $ over a field $k$. For any field extension $K/k$, we let the \defi{extension $\varphi_K$ of $\varphi$} be the degree $d$ form on $V_K := V \otimes_k K$ defined by 
\[ \varphi_K(v \otimes w) := w^d \varphi(v),\]
for all $v  \in V$ and $w \in K$. Note that if $K=k$, then we omit the subscript and write $\varphi_k = \varphi$. 
\end{defn}
\begin{defn} Let $\varphi $ be a form of degree $d$ on a finite-dimensional vector space $V$ over a field $k$.  We say that $\varphi$ is \defi{isotropic over $K$}  if $\varphi_K$ is isotropic, i.e,  if there exists some non-zero $w \in V_K$ with $\varphi_K(w) = 0$; otherwise, we say that $\varphi$ is \defi{anisotropic over $K$}.
\end{defn}
\begin{remark} If $X\subset \PP^N_k$ is a degree $d$ hypersurface over a field $k$ corresponding to the degree $d$ form $\varphi$ on $k^{N+1}$, then, for any extension $K/k$, we have that $X(K) \neq \emptyset$ if and only if $\varphi_K$ is isotropic.
\end{remark}

If $(i_0, ..., i_N) \in \Z_{\geq 0}^{N+1}$ and $x := (x_0, ..., x_N)$, we denote by $\underline{x}^{(i_0, ..., i_N)}$ the monomial  in which  $x_j$ appears with exponent $i_j$ if  $i_j >0$ and does not appear at all if $i_j=0$. In what follows, we will sometimes need to focus on a special type of forms, which we call \emph{diagonal-full}.

\begin{defn}\label{diagfull} Let $\varphi$ be a form of degree $d$ on a finite-dimensional vector space $V \cong k^{N+1}$ over a field $k$, say
\[ \varphi(x_0, ..., x_{N}) = \sum_{\substack{(i_0, ..., i_N) \in \Z_{\geq 0}^{N+1}:\\ i_0 + ... + i_N = d}} a_{(i_0, ..., i_N)} \underline{x}^{(i_0, ..., i_N)},\]
with $ a_{(i_0, ..., i_N)} \in k$. We say that $\varphi$ is \defi{diagonal-full} if $a_{(d,0, ..., 0)}, a_{(0,d,0, ..., 0)}, ..., a_{(0, ...,0 , d)} \neq 0$.\\
In more geometric terms, a degree $d$ hypersurface $X \subset \PP_k^{N}$ is \defi{diagonal-full} if $X$ is given by an equation of the form
\[  \sum_{\substack{(i_0, ..., i_N) \in \Z_{\geq 0}^{N+1}:\\  i_0 + ... + i_N = d}}  a_{(i_0, ..., i_N)} \underline{x}^{(i_0, ..., i_N)} = 0\]
with  $ a_{(i_0, ..., i_N)} \in k$ and $a_{(d,0, ..., 0)}, a_{(0,d,0, ..., 0)}, ..., a_{(0, ...,0 , d)} \neq 0$.

\end{defn}

\begin{remark} \label{alreadyiso}If a degree $d$ form $\varphi = \varphi(x_0, ..., x_N)$ on a finite-dimensional vector space $V \cong k^{N+1}$ over a field $k$ is \emph{not} diagonal-full, then it is already isotropic over $k$:  if, say, the monomial $x_i^d$ is missing, then
\[v := (0,...,0,\underbrace{1}_{\textrm{$i$-th coordinate}},0,...,0) \in V\] 
 is a non-trivial zero of $\varphi$. This implies that, in much of what follows, we can restrict our attention to diagonal-full forms.
\end{remark}

\begin{defn} Let $\varphi$ be a form on a finite-dimensional vector space $V \cong k^{N+1}$ over a field $k$. If $K/k$ is a field extension, we let 
\[D(\varphi_K) := \{  \varphi_K(w) : w \in V_K \cong K^{N+1},   \varphi_K(w)  \neq 0 \} \subset K^{\times}.\]
We denote by $\langle D(\varphi_K) \rangle$ the multiplicative set in $K^\times$ whose elements are finite products of elements in $D(\varphi_K)$.
\end{defn}
The following is a straightforward modification of \cite[Theorem 18.3, proof of  $(2) \Rightarrow (3)$]{EKM}.

\begin{lemma} \label{lemma1} Let $\varphi$ be a form of degree $d$ on a finite-dimensional vector space $V$ over $k$ and let $f \in k[t]$ be a non-constant polynomial.  If there exists some $a \in k^\times$ such that $ a f \in \langle D(\varphi_{k(t)}) \rangle$, then $\varphi_{k(p)}$ is isotropic for each irreducible polynomial $p$ occurring to a power coprime to $d$ in the factorisation of $f$, where $k(p) := k[t]/(p(t))$.
\end{lemma}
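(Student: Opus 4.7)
The plan is to mimic the valuation-theoretic argument in \cite[Theorem 18.3]{EKM} proving $(2)\Rightarrow(3)$, extending it from quadratic to degree-$d$ forms by keeping track of congruences modulo $d$ rather than modulo $2$. First I would unpack the hypothesis: write
\[
af \;=\; \prod_{i=1}^m \varphi_{k(t)}(w_i)
\]
with $w_i \in V \otimes_k k(t)$ and $\varphi_{k(t)}(w_i) \neq 0$. By the homogeneity relation $\varphi_{k(t)}(c\,w) = c^d \,\varphi_{k(t)}(w)$ for $c \in k(t)^\times$, each $w_i$ may be rescaled so that it lies in $V \otimes_k k[t]$, and moreover so that its coordinates have no common polynomial factor. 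After this clearing of denominators the identity takes the form
\[
a\,f(t)\cdot c(t)^d \;=\; \prod_{i=1}^m u_i(t)
\]
in $k[t]$, where $u_i(t):=\varphi(w_i(t)) \in k[t]\setminus\{0\}$ and $c(t)\in k[t]\setminus\{0\}$ accounts for the scalings.

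Now fix an irreducible factor $p$ of $f$ appearing to some power $e$ with $\gcd(e,d)=1$, and apply the $p$-adic valuation $\ord_p$ to both sides of the identity. The left-hand side has valuation $\ord_p(a)+e+d\cdot\ord_p(c) = e + d\cdot\ord_p(c)$, while the right-hand side has valuation $\sum_i \ord_p(u_i)$. Since $\gcd(e,d)=1$ and $d\geq 2$ force $e\not\equiv 0\pmod{d}$, not all of the $\ord_p(u_i)$ can be divisible by $d$; hence some index $i_0$ satisfies $\ord_p(u_{i_0})\not\equiv 0\pmod d$.

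For this $i_0$, I would pull out the highest power of $p$ simultaneously dividing every coordinate of $w_{i_0}$: write $w_{i_0}=p^{a}\cdot v$ with $v\in V\otimes_k k[t]$ whose coordinates are not \emph{all} divisible by $p$. Then $u_{i_0}=p^{da}\varphi(v)$, so
\[
\ord_p(\varphi(v)) \;=\; \ord_p(u_{i_0})-da \;\not\equiv\;0 \pmod d.
\]
In particular $\ord_p(\varphi(v))\geq 1$, i.e.\ $p\mid \varphi(v)$ in $k[t]$. Reducing modulo $p$ then yields a nonzero vector $\bar v \in V\otimes_k k(p)$ (nonzero because not all coordinates of $v$ are divisible by $p$) satisfying $\varphi_{k(p)}(\bar v)=0$, which is exactly isotropy of $\varphi_{k(p)}$.

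The only subtlety compared with the quadratic case is purely bookkeeping: degree-$d$ forms are not given by a bilinear pairing, so I would view $\varphi$ as a polynomial map $V\otimes_k R\to R$ for any commutative $k$-algebra $R$ (obtained by base change to $R$), which makes the scaling identity $\varphi(r\cdot v)=r^d\varphi(v)$ and the compatibility of $\varphi$ with the quotient $k[t]\to k(p)$ tautological. With these formalities in place no step presents a genuine obstacle; the core of the argument is the single $\bmod\,d$ valuation inequality in the second paragraph.
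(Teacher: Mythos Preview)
Your proof is correct and follows essentially the same valuation-theoretic argument as the paper: clear denominators to obtain an identity $afh^d=\prod_i \varphi(v_i)$ in $k[t]$, compare $p$-adic valuations modulo $d$ to locate some factor with $\nu_p(\varphi(v_i))\not\equiv 0\pmod d$, and reduce modulo $p$ to produce a nontrivial zero over $k(p)$. The only cosmetic difference is that you normalise each $w_i$ to be primitive at the outset, which makes your later step of extracting $p^a$ from $w_{i_0}$ redundant (necessarily $a=0$); the paper instead extracts only the $p$-part from each $v_i$ without first imposing global primitivity.
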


\begin{proof}
Since $af \in  \langle D(\varphi_{k(t)}) \rangle$, there exist some $0 \neq h \in k[t]$ and $v_1, ..., v_m \in V[t]$ such that
\[ a f h^d = \prod_{i=1}^m \varphi(v_i). \]
If it exists, let $p \in k[t]$ be a non-constant monic irreducible factor of $f$  appearing  with exponent $\lambda$ coprime to $d$ in the factorisation of $f$ into irreducible polynomials, i.e. say $f = p^\lambda f'$ with $p$ monic irreducible, $\deg(p) \geq 1$,  $p \nmid f'$, and $\gcd(\lambda, d) = 1$. Write $v_i = p^{k_i} v_i'$, where $k_i \geq 0$ and $p \nmid v'_i$, for each $i = 1, ...,m$. Then
\[a p^\lambda f' h^d =  a f h^d = \prod_{i=1}^m \varphi(v_i) = \prod_{i=1}^m p^{d k_i} \varphi(v_i') = p^{d \sum_{i=1}^m k_i} \prod_{i=1}^m  \varphi(v_i') . \]
Since 
\[ \lambda + d \nu_p(h) =  \nu_p(a p^\lambda f' h^d) = \nu_p\left(\prod_{i=1}^m p^{d k_i} \varphi(v_i') \right)= d \sum_{i=1}^m k_i + \sum_{i=1}^m  \nu_p( \varphi(v_i') ), \]
where $\nu_p(-)$ denotes the valuation at $p$, and since $\gcd(\lambda, d) = 1$, it follows that $\nu_p(\varphi(v_j')) \geq 1$ for some $j \in \{1, ..., m\}$. This means that  $\varphi(v_j') \equiv 0 \bmod p$. Since by construction $p \nmid v_j'$, we also have that $v_j' \not\equiv 0 \bmod p$. Hence, $\varphi_{k(p)}$ is isotropic, as required.
\end{proof}

\begin{lemma} \label{degr} Let $d$ be a positive integer. Let $k$ be a field and let $\varphi$ be a diagonal-full form of degree $d$ on a finite-dimensional vector space $V \cong k^{N+1}$ over $k$. Suppose that $\varphi$ is anisotropic. Let $0 \neq s := (s_0, ..., s_N)  \in V[t]$. Then $\deg(\varphi(s)) = d \deg(s)$, where $deg(s):=\max_{i=0, ..., N}(\deg(s_i))$.
\end{lemma}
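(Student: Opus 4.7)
The plan is to compute the leading coefficient of $\varphi(s)$ in $t$ explicitly and show it is the value of $\varphi$ at a nonzero point of $V$, which must then be nonzero by anisotropy.

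Set $r := \deg(s) = \max_{i} \deg(s_i)$. For each $j$, let $\ell_j \in k$ denote the coefficient of $t^r$ in $s_j$, with the convention that $\ell_j = 0$ when $\deg(s_i) < r$; then $s_j = \ell_j t^r + (\text{terms of lower degree in }t)$. Because $s \neq 0$ and $r$ is achieved as the maximum of the $\deg(s_j)$, at least one index $j$ has $\deg(s_j) = r$, so the vector $\ell := (\ell_0, \dots, \ell_N) \in V$ is nonzero.

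Next I would expand
\[ \varphi(s) = \sum_{\substack{(i_0,\dots,i_N) \in \Z_{\geq 0}^{N+1} \\ i_0 + \dots + i_N = d}} a_{(i_0,\dots,i_N)}\, s_0^{i_0} \cdots s_N^{i_N}. \]
Each term $s_0^{i_0}\cdots s_N^{i_N}$ is a polynomial in $t$ of degree at most $\sum i_j \deg(s_j) \leq (\sum i_j) r = dr$, and its coefficient of $t^{dr}$ equals $\prod_j \ell_j^{i_j}$ (the only way to obtain $t^{dr}$ from a product of polynomials of degrees $\leq r$ is to take the $t^r$-coefficient from each factor). Summing over $(i_0,\dots,i_N)$, the coefficient of $t^{dr}$ in $\varphi(s)$ is exactly $\varphi(\ell)$.

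Since $\ell \in V$ is nonzero and $\varphi$ is anisotropic over $k$, we have $\varphi(\ell) \neq 0$, so this coefficient is nonzero and $\deg(\varphi(s)) = dr$. The upper bound $\deg(\varphi(s)) \leq dr$ is automatic from the degree computation above, so both estimates match. I do not expect any real obstacle here: the diagonal-full hypothesis is in fact subsumed by anisotropy in view of Remark \ref{alreadyiso} (if $\varphi$ failed to be diagonal-full, a standard basis vector would already isotropize $\varphi$), so the one substantive input is the elementary observation that the top-degree coefficient in $t$ of $\varphi(s)$ is $\varphi$ applied to the vector of leading coefficients.
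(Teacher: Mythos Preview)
Your argument is correct and is essentially the same as the paper's: both identify the coefficient of $t^{d\deg(s)}$ in $\varphi(s)$ with $\varphi$ evaluated at the vector of leading coefficients (your $\ell$, the paper's $\tilde{s}$), and invoke anisotropy to conclude it is nonzero. The only difference is cosmetic---you compute the leading coefficient directly, while the paper phrases the same computation as a contradiction---and your remark that the diagonal-full hypothesis is already implied by anisotropy (via Remark~\ref{alreadyiso}) is a valid observation.
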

\begin{proof}  Let 
\[ I_{\deg(s)} := \{ i \in \{0, ..., N\}: \deg(s_i) = \deg(s)\}.\]

Since $\varphi$ is diagonal-full, we can write it as
\[ \varphi(x_0, ..., x_N) = \sum_{\substack{(i_0, ..., i_N) \in \Z_{\geq 0}^{N+1}: \\  i_0 + ... + i_N = d}} a_{(i_0, ..., i_N)} \underline{x}^{(i_0, ..., i_N)},\]
with $ a_{(i_0, ..., i_N)} \in k$ and $a_{(d,0, ..., 0)}, a_{(0,d,0, ..., 0)}, ..., a_{(0, ...,0 , d)} \neq 0$.
If $\deg(\varphi(s)) \neq d \deg(s)$, then some cancellation must have occured among the leading coefficients (not all 0, since $\varphi$ is diagonal-full)  of those polynomials $ a_{(i_0, ..., i_N)} \underline{s(t)}^{(i_0, ..., i_N)}$ of degree $d \deg(s)$. (We note that, since $d \deg(s)$ is the maximal degree that can possibly be attained, the polynomial $\underline{s(t)}^{(i_0, ..., i_N)}$  has degree $d \deg(s)$ if and only if $i_j=0$ for all $j \notin I_{\deg(s)}$.) In particular, if we let $0 \neq \tilde{s} \in k^{N+1} \cong V$ be defined by
\[ \tilde{s}_i = 
\begin{cases}
s^\ast_i & \textrm{ if } i \in I_{\deg(s)},\\
0  & \textrm{ if } i \notin I_{\deg(s)},\\
\end{cases}
\]
where $s^\ast_i \in k$ denotes the leading coefficient of $s_i(t)$, then $\tilde{s}$ must satisfy
\[\varphi(\tilde{s}) =  \sum_{\substack{(i_0, ..., i_N) \in \Z_{\geq 0}^{N+1}:\\  i_0 + ... + i_N = d}} a_{(i_0, ..., i_N)} \underline{\tilde{s}}^{(i_0, ..., i_N)}= 0,\]
 which would imply that $\varphi$ is isotropic, a contradiction.
Hence, $\deg(\varphi(s)) = d \deg(s)$, as required.
\end{proof}

\begin{lemma} \label{stayaniso} Let $k$ be any field and let $\varphi$ be a degree $d$ form on a finite-dimensional vector space $V \cong k^{N+1}$ over $k$. If $\varphi$ is anisotropic over $k$, then $\varphi_{k(t)}$ is anisotropic over the function field $k(t)$.
\end{lemma}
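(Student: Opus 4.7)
The plan is to argue by contradiction, reducing to the setting where Lemma \ref{degr} applies. Suppose $\varphi_{k(t)}$ is isotropic. Then there exists $0 \neq w \in V_{k(t)} \cong k(t)^{N+1}$ with $\varphi_{k(t)}(w) = 0$. After clearing denominators (multiplying each coordinate by a common denominator $h \in k[t] \setminus \{0\}$), I obtain $0 \neq s \in V[t] \cong k[t]^{N+1}$ with $\varphi(s) = h^d \varphi_{k(t)}(w) = 0$ as an element of $k[t]$.

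Next I observe that $\varphi$ must be diagonal-full: by Remark \ref{alreadyiso}, if one of the pure monomials $x_i^d$ were missing from $\varphi$, then $\varphi$ would already be isotropic over $k$ via the standard basis vector $e_i$, contradicting the hypothesis. Hence the hypothesis of Lemma \ref{degr} is satisfied.

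Applying Lemma \ref{degr} to the nonzero $s \in V[t]$ then gives
\[ \deg(\varphi(s)) = d \deg(s). \]
Since $s \neq 0$, we have $\deg(s) \geq 0$, and therefore $\deg(\varphi(s)) \geq 0$; in particular $\varphi(s) \neq 0$ in $k[t]$, contradicting the equation $\varphi(s) = 0$ derived above. Hence $\varphi_{k(t)}$ must be anisotropic.

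There is no substantive obstacle here beyond setting up the contradiction correctly: the content is entirely packaged in Remark \ref{alreadyiso} (to force diagonal-fullness) and Lemma \ref{degr} (to forbid degree cancellation). The only minor point to be careful about is the clearing-of-denominators step, which requires noting that scaling an isotropic vector over $k(t)$ by an element of $k(t)^\times$ preserves isotropy, so that we may indeed assume $s \in V[t]$.
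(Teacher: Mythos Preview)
Your proof is correct but takes a genuinely different route from the paper's. The paper argues by specialization: after clearing denominators and normalizing so that $\gcd(hw_0,\dots,hw_N)=1$, it evaluates at some $a\in k$; anisotropy over $k$ forces every coordinate to vanish at $a$, so $(t-a)$ divides each $hw_i$, contradicting the gcd condition. Your argument instead routes everything through Lemma~\ref{degr}: anisotropy forces diagonal-fullness via Remark~\ref{alreadyiso}, and then Lemma~\ref{degr} gives $\deg(\varphi(s))=d\deg(s)\geq 0$, so $\varphi(s)\neq 0$. Your approach is tidier in that it reuses machinery already in place and avoids the gcd normalization; the paper's approach is more self-contained, needing neither diagonal-fullness nor Lemma~\ref{degr}. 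Both are short and valid, and there is no circularity since Lemma~\ref{degr} precedes this lemma and its hypotheses (diagonal-full, anisotropic over $k$) are exactly what you have.
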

\begin{proof} Assume, for a contradiction, that $\varphi_{k(t)}$ is isotropic.  Let $w = (w_0, ..., w_N) \in k(t)^{N+1}$ be a non-zero vector with $\varphi_{k(t)  }(w)=0$.
 Let $h \in k(t)$ be such that $hw :=(hw_0, ..., h w_N)$ satisfies $h w_i \in k[t]$ for all $i = 0, ..., N$ and $\gcd(hw_0, ..., hw_N) =1$. Then 
$\varphi_{k(t)  }(hw) = h^d \varphi_{k(t)  }(w) = 0$. In particular, if we specialise $t = a$ for some $a \in k$, we also have $\varphi((h(a)w_0(a), ..., h(a)w_N(a))) = 0$. Since $\varphi$ is anisotropic over $k$, this implies that $h(a)w_i(a) = 0$ for all $i = 0, ..., N$. But this, in turn, implies that $(t-a) \mid h(t)w_i(t)$ for all $i = 0, ..., N$, which is a contradiction to $\gcd(hw_0, ..., hw_N) =1$. Hence, $\varphi_{k(t)}$ must be anisotropic, as required.
\end{proof}

\section{Proof of the main theorems}
In this section, using fairly simple arguments, we prove (in the language of forms) the main theorems of the paper. Despite these arguments being generalisations to higher degree forms of some of the ideas in the proof of Springer's theorem, to the best of the author's knowledge they have not appeared  anywhere in the literature before and are thus to be considered novel.

\subsection{The case for general $d$ and $n$.}  Before we can prove our first main theorem, we need some preliminary definitions and results on partitions.

\begin{defn} Let $d, n \geq 2$ be positive integers. Let $n^\ast \in \{1, ..., d-1\}$ be the unique integer such that  $n^\ast \equiv - n \bmod d.$
We define the set
\[ S_{d, n} := \left\{ n^\ast + j d : j \in \Z_{\geq 0} \textrm { and }  n^\ast + j d < n(d-1) \right\}.\]
\end{defn}

\begin{defn} Let $u \in \Z_{>0}$. By a \defi{partition of $u$} we mean a non-empty multiset $[a_1, ..., a_r]$ with $r \in \Z_{>0}$, $a_i \in \Z_{>0}$, and $u = \sum_{i=1}^r a_i$.  By a \defi{subpartition of a partition $[a_1, ..., a_r]$ of $u$} we mean a non-empty multisubset of  $[a_1, ..., a_r]$.
We let $\frakP(u)$ denote the set of all partitions of $u$.
\end{defn}
We will need to characterise the maximal element in $S_{d,n}$ and its partitions.

\begin{lemma} \label{eqB} For any positive integers $n,d \geq 2$ with $\gcd(d,n)=1$ we have 
\[ \max S_{d,n} = nd-n-d.\]
\end{lemma}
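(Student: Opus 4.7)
The plan is to read off $\max S_{d,n}$ directly from its definition by identifying the largest admissible index $j$. I would first rewrite the defining inequality $n^* + jd < n(d-1) = nd - n$ as $jd < nd - n - n^*$. Using the defining congruence $n^* \equiv -n \pmod{d}$ together with $n, n^* \geq 1$, I would write $n + n^* = md$ for the positive integer $m := (n+n^*)/d$; the inequality then simplifies to $j < n - m$, so the largest admissible index is $j_{\max} = n - m - 1$, provided this is non-negative.

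Next I would verify $j_{\max} \geq 0$, i.e.\ $m < n$: from $n^* \leq d - 1$, one gets $m \leq (n + d - 1)/d$, and the inequality $(n+d-1)/d < n$ is equivalent to $(n-1)(d-1) > 0$, which holds because $n, d \geq 2$. The hypothesis $\gcd(n,d) = 1$ enters only to guarantee $d \nmid n$, so that $n^* \in \{1, \ldots, d-1\}$ is well-defined in the first place (and in particular $S_{d,n}$ is nonempty).

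Finally I would substitute and use $n + n^* = md$ to obtain
\[ \max S_{d,n} = n^* + j_{\max}\, d = n^* + (n - m - 1)d = (n + n^* - md) + nd - n - d = nd - n - d. \]
There is no genuine obstacle here; the entire argument is a short direct computation, and the only step requiring any care is the non-emptiness check $j_{\max} \geq 0$.
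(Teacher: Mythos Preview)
Your proof is correct and follows the same direct-computation strategy as the paper: determine the largest admissible $j$ and substitute. The only difference is cosmetic---where the paper splits into the cases $n<d$ (so $n^*=d-n$) and $d<n$ (so $n^*=\alpha d-n$ for a suitable $\alpha$), you unify both via the single parameter $m=(n+n^*)/d$, which streamlines the argument slightly.
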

\begin{proof} We assume first that $n < d$.  If $n^\ast \in \{1, 2, ..., d-1\}$ is such that $n^\ast \equiv -n \bmod d$, then, since $n<d$, we have $n^\ast = d-n$.
Hence, 
\[ \begin{array}{ll}
S_{d,n} &= \{ d-n + jd : j \in \Z_{\geq 0}  \textrm{ and } d-n + j d <n (d-1) \} \\
&= \{ d-n + jd : j \in \{0,1, ..., n-2\} \}\\
\end{array}\]
and so $\max S_{d,n} = d - n + (n-2)d = dn -n - d$.

Assume now that $d < n$. If $n^\ast \in \{1, 2, ..., d-1\}$ is such that $n^\ast \equiv -n \bmod d$, then, since $d<n$, we can write $n^\ast = \alpha d - n$ where $\alpha$ is the unique positive integer strictly between $\frac{n}{d}$ and $\frac{d+n}{d}$.
Hence, 
\[ \begin{array}{ll}
S_{d,n} &= \{  \alpha d - n + jd : j \in \Z_{\geq 0}  \textrm{ and } \alpha d - n + j d < n (d-1) \} \\
&= \{  \alpha d - n + jd : j \in \{0,1, ..., n-\alpha -1 \} \}\\
\end{array}\]
and so $\max S_{d,n} = \alpha d - n + ( n-\alpha -1)d  = dn -n - d$.

So, in any case,   $\max S_{d,n}  = dn -n - d$, as required. 
\end{proof}

\begin{lemma}\label{subpar} Let $n,d \geq 2$ be integers with $\gcd(d, n)=1$. Let $u \in S_{d,n}$. If  $u \neq nd - n -d$ and  $[a_1, ..., a_r]$ is a partition of $u$, then $[a_1, ..., a_r]$ is a subpartition of the partition $[a_1, ..., a_r, \mu d]$ of $dn -n - d$, where $\mu := \frac{nd-n-d - u}{d} \in \Z_{>0}$.
\end{lemma}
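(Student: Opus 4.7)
The statement is essentially a bookkeeping claim: once we verify that $\mu$ is a well-defined positive integer, the conclusion that $[a_1,\ldots,a_r]$ is a non-empty multisubset of $[a_1,\ldots,a_r,\mu d]$ is tautological, and that the latter multiset is a partition of $nd-n-d$ follows immediately from $\sum a_i = u$ together with $u + \mu d = nd - n - d$. Thus the only real content is to show $\mu \in \Z_{>0}$.

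The plan is as follows. First, I would unpack the definition of $S_{d,n}$: since $u \in S_{d,n}$, we can write $u = n^\ast + jd$ for some $j \in \Z_{\geq 0}$ with $n^\ast + jd < n(d-1)$. Recall $n^\ast \in \{1, \ldots, d-1\}$ is characterised by $n^\ast \equiv -n \pmod{d}$, which (using $\gcd(n,d)=1$) is well defined, and in particular $d \mid n + n^\ast$. Write $n + n^\ast = \alpha d$ for a positive integer $\alpha$.

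Second, I would compute
\[ nd - n - d - u = nd - n - d - n^\ast - jd = d(n-1-j) - (n+n^\ast) = d(n-1-j-\alpha), \]
showing that $\mu = n - 1 - j - \alpha \in \Z$ and that $d \mid nd - n - d - u$, so $\mu$ is well defined as an integer. Positivity of $\mu$ then follows from the hypothesis $u \neq nd - n - d$: since by Lemma \ref{eqB} we have $u \leq \max S_{d,n} = nd - n - d$, the assumption $u \neq nd - n - d$ forces $u < nd - n - d$, hence $\mu d > 0$, hence $\mu > 0$.

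Third, with $\mu \in \Z_{>0}$ established, the multiset $[a_1, \ldots, a_r, \mu d]$ is indeed a partition of $nd - n - d$ (all entries are positive integers summing correctly), and $[a_1, \ldots, a_r]$ is by construction a non-empty multisubset of it, i.e., a subpartition. I do not anticipate any real obstacle here; the main care needed is just tracking the congruence $n + n^\ast \equiv 0 \pmod d$ to ensure that $\mu$ is an honest integer rather than merely a rational number.
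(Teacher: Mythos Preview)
Your proof is correct and follows essentially the same approach as the paper: both write $u = n^\ast + jd$ and use that $nd-n-d = \max S_{d,n}$ (Lemma~\ref{eqB}) together with $u \neq nd-n-d$ to conclude $\mu \in \Z_{>0}$. The paper phrases this slightly more compactly by also writing $nd-n-d = n^\ast + j'd$ with $j' > j$ and observing $\mu = j'-j$, but this is equivalent to your explicit computation $\mu = n-1-j-\alpha$.
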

\begin{proof} Write $u = n^\ast + jd$ for some $j \in \Z_{\geq 0}$ and $dn -n -d = n^\ast +  j'd$ for some $j' \in \Z_{\geq 0}$ with $j' > j$. By writing
$ nd-n-d = u + (j'-j)d$ and by simple computations, the result follows.
\end{proof}

\begin{defn} \label{bad}Let $n, d \geq 2$ be integers with $\gcd(d,n)=1$. Let $u \in \Z_{0}$. We say that a partition $[a_1, ..., a_r] \in \frakP(u)$  is \defi{bad} if any of the following two cases holds:
\begin{itemize}
\item[(i)]  for all $i \in \{1, ..., r\}$, we have that $\gcd(a_i, d)>1$;
\item[(ii)] for all $i \in \{1, ..., r\}$ with $\gcd(a_i,d)=1$, we have that $n \mid a_i$.
\end{itemize}
We call the partition \defi{good} otherwise. We let 
\[\frakP_{\textrm{bad}}(u) \subset \frakP(u)\]
 be the (potentially empty) set of bad partitions of $nd-d-n$.
\end{defn}

We can now prove the main theorem of this paper, which answers Question \ref{q2} in certain cases.

\begin{thm} \label{main} Let $k$ be any field. Let $\varphi $ be a degree $d \geq 2$ form on a finite-dimensional vector space $V$ over a field $k$.  Suppose that $\varphi_K$ is isotropic for some simple extension $K/k$ with $n:=[K:k] \geq 2$ and $\gcd(n,d)=1$.
If $\frakP_{\textrm{bad}}(nd-n-d) = \emptyset$, then $\varphi_L$ is also isotropic for some extension $L/k$ with $\gcd([L:k], d) = 1$, $n \nmid [L:k]$, and $[L:k] \leq nd -n-d$, where a set $\frakD$ of possible values for $[L:k]$ is explicitly computable. Moreover, if a non-trivial solution for $\varphi_K$  is known explicitly, then $L/k$ can be explicitly computed  as well.  \end{thm}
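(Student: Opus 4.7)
The plan is to mimic Springer's specialisation argument: from a non-trivial zero of $\varphi_K$, build a polynomial certificate $\varphi(\tilde w)\in k[t]$, factorise it, and extract an irreducible factor yielding a suitable smaller extension. By Remark~\ref{alreadyiso} I may assume $\varphi$ is diagonal-full, and if $\varphi$ is already isotropic over $k$ I take $L=k$, so I may also assume $\varphi$ is anisotropic over $k$. Write $K=k(\alpha)=k[t]/(p(t))$ with $p$ the minimal polynomial of $\alpha$, of degree $n$, and lift the given isotropic vector of $\varphi_K$ to $\tilde w\in V[t]$ whose components have degree $<n$; then $\tilde w\not\equiv 0\pmod p$ and $p\mid\varphi(\tilde w)$. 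Lemma~\ref{stayaniso} ensures $\varphi(\tilde w)\neq 0$ in $k[t]$, and Lemma~\ref{degr} gives $\deg\varphi(\tilde w)=d\deg\tilde w\leq d(n-1)=nd-d$.

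Next, factor $\varphi(\tilde w)=p^{e_0}\prod_{i=1}^{r}p_i^{e_i}$ into distinct monic irreducibles with $p_i\neq p$, and set $\psi:=\prod_{i\geq 1} p_i^{e_i}$. Since $e_0\geq 1$, $\deg\psi\leq nd-n-d$; combined with the congruence $d\mid\deg\varphi(\tilde w)$, a short case analysis on $e_0$ places $\deg\psi$ in $S_{d,n}$. Associate to $\psi$ the partition $\pi_\psi$ of $\deg\psi$ whose parts are the degrees $\deg p_i$, each taken with multiplicity $e_i$, and, if $\deg\psi<nd-n-d$, extend $\pi_\psi$ by adjoining the single part $\mu d$ furnished by Lemma~\ref{subpar} to obtain a partition $\tilde\pi$ of $nd-n-d$. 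The hypothesis $\frakP_{\textrm{bad}}(nd-n-d)=\emptyset$ then forces $\tilde\pi$ to be good in the sense of Definition~\ref{bad}, so some part $a_j$ of $\tilde\pi$ satisfies $\gcd(a_j,d)=1$ and $n\nmid a_j$; since $\gcd(\mu d,d)>1$, necessarily $a_j=\deg p_i$ for some $i\geq 1$.

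Finally, $\varphi(\tilde w)\in D(\varphi_{k(t)})\subset\langle D(\varphi_{k(t)})\rangle$, so Lemma~\ref{lemma1} applied with $f=\varphi(\tilde w)$ and this factor $p_i$ gives isotropy of $\varphi_{k(p_i)}$; taking $L:=k[t]/(p_i(t))$ then yields $[L:k]=\deg p_i\leq nd-n-d$ with $\gcd([L:k],d)=1$ and $n\nmid[L:k]$, as required. The set $\frakD$ consists of the part-sizes appearing in good partitions of $nd-n-d$ and is thus explicitly computable from $(n,d)$, and given an explicit $\tilde w$ the extension $L/k$ is read off by factoring $\varphi(\tilde w)$ and selecting the appropriate irreducible factor. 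The main obstacle I anticipate is reconciling the partition-theoretic selection with the hypothesis of Lemma~\ref{lemma1}: the partition furnishes an $a_j=\deg p_i$ with the correct divisibility properties on $[L:k]$, but Lemma~\ref{lemma1} also requires $\gcd(e_i,d)=1$ for $p_i$ to be \emph{usable}. Arranging this compatibility --- either by building usability into the very definition of the partition (for instance by bundling factors with $\gcd(e_i,d)>1$ into single parts $e_i\deg p_i$, which are automatically non-coprime to $d$, while splitting usable factors into $e_i$ copies of $\deg p_i$), or by adjusting the lift $\tilde w$ --- is the delicate combinatorial bookkeeping that I expect to occupy the bulk of the proof.
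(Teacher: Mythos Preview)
Your overall strategy matches the paper's, but there are two concrete fixable gaps. First, the claim that ``a short case analysis on $e_0$ places $\deg\psi$ in $S_{d,n}$'' is false as stated: since $\deg\varphi(\tilde w)=d\deg\tilde w$, you get $\deg\psi\equiv -e_0 n\pmod d$, which lies in $S_{d,n}$ only when $e_0\equiv 1\pmod d$, and nothing forces that. The paper avoids this by stripping off \emph{exactly one} copy of $p$: set $g:=\varphi(\tilde w)/p$, so $\deg g=d\deg\tilde w-n\in S_{d,n}$ automatically (and $\deg g\geq 1$ since $\deg\tilde w\geq 1$ by anisotropy). Any remaining copies of $p$ in $g$ then contribute a part divisible by $n$ to the partition and are harmlessly absorbed by the ``bad'' side.

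Second, the exponent compatibility you flag as ``delicate combinatorial bookkeeping'' is in fact a one-line choice of partition. Instead of taking the parts to be $\deg p_i$ with multiplicity $e_i$, take each part to be $e_i\deg p_i$ (one part per distinct irreducible factor of $g$). Then the hypothesis $\frakP_{\textrm{bad}}(nd-n-d)=\emptyset$ yields a part $e_i\deg p_i$ with $\gcd(e_i\deg p_i,d)=1$ and $n\nmid e_i\deg p_i$; the first condition gives \emph{both} $\gcd(e_i,d)=1$ (the Lemma~\ref{lemma1} hypothesis you were worried about) and $\gcd(\deg p_i,d)=1$, while the second gives $n\nmid\deg p_i$. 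Since $n\nmid\deg p_i$ and $\deg p=n$, we have $p_i\neq p$, so $p_i$ occurs in $\varphi(\tilde w)=p\cdot g$ with the same exponent $e_i$, and Lemma~\ref{lemma1} applies. With this bundling, the set $\frakD$ must be described as the divisors $\delta\mid a_j$ (with $n\nmid\delta$) of good parts $a_j$, not the parts themselves, since $[L:k]=\deg p_i$ is only a divisor of the selected part $e_i\deg p_i$.
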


\begin{proof} 
If $\varphi$ is isotropic over $k$, we can take $L=k$. So, from now on, we assume that $\varphi$ is anistropic over $k$. Moreover, by Remark \ref{alreadyiso}, we can also assume that $\varphi$ is diagonal-full, since otherwise it is already isotropic over $k$.

Since $K/k$ is a simple extension, we let $K = k(\alpha)$ and let $f \in k[t]$ be the minimal (irreducible) polynomial of $\alpha$ over $k$. Since, by assumption, $\varphi_{k(f)}$ is isotropic, it follows that there exists some $v \in V[t]$ such that $\varphi(v) \equiv 0 \bmod f$ but $v \not\equiv 0 \bmod f$.
By the division algorithm, there exist some $w,s \in V[t]$ such that
\[ v = fw + s\]
and  $\deg(s) < \deg(f) =n$.
Since
\[ \varphi(v) = \varphi(fw+s) = f^d \varphi(w) + f (\textrm{other stuff}) + \varphi(s)\]
and since $f \mid \varphi(v)$, it follows that $f \mid \varphi(s)$. If $s=0$, then $f \mid v$, which contradicts $f \nmid v$.  Hence, $s \neq 0$. Let $\varphi(s) = fg$ for some $g \in k[t]$. Since $s \neq 0$ and since, by assumption, $\varphi$ is anisotropic, we have by Lemma \ref{stayaniso} that $\varphi(s) \neq 0$. It follows that $g \neq 0$. Hence, we have that $fg \in \langle D(\varphi_{k(t)}) \rangle$.
Since $\varphi(s) = fg$ and $\deg(s) < \deg(f)$, it follows that
\[ \deg(f) + \deg(g) = \deg(\varphi(s)) < d \deg(f) = dn,\]
that is, \( \deg(g) < n(d-1).\) Notice also that $\deg(g) \geq 1$, since otherwise we would get, by Lemma \ref{degr}, that $d \deg(s) = \deg(\varphi(s)) = \deg(f)=n$, which is a contradiction to the fact that $\gcd(d,n) =1$.

In the remainder of the proof, we aim to show that there exists an irreducible factor $p$ of $fg$ of exponent $\lambda$ coprime to $d$ and with $\gcd(\deg(p), d)=1$, $n \nmid \deg(p)$, and $\deg(p) > 1$, with the goal of then applying Lemma \ref{lemma1} to it. Let the factorisation of $g$ into irreducible factors be 
\[g = g^\ast \prod_{i=1}^r p_i^{\lambda_i}\]
where $g^\ast \in k^\times$ and, for each $i = 1, ..., r$,  the distinct polynomials $p_i \in k[t]$ are monic and irreducible, with  $\deg(p_i) =: u_i$ and  $\lambda_i \geq 1$.
Then
\[ \deg(g) = \sum_{i=1}^r u_i \lambda_i < n(d-1).\]

We claim that $\deg(g) \in S_{d,n}$. Indeed, by Lemma \ref{degr}, we have that $\deg(\varphi(s)) = d \deg(s)$. Since $\varphi(s) = fg$, it follows that $\deg(g) = -n + d \deg(s)$. Since, moreover, $1 \leq \deg(g) < n(d-1)$, it follows that $\deg(g) \in S_{d,n}$, as claimed.

By Lemma \ref{eqB}, we then know that $\deg(g) \leq \max S_{d,n} = nd-n-d$. Moreover, by Lemma \ref{subpar}, if $\deg(g) \neq nd-n-d$, then we have that $[\lambda_1 u_1, ..., \lambda_r u_r, jd]$ is a partition of $nd-n-d$, for some $j \in \Z_{\geq 1}$. Since, by hypothesis, $\frakP_{\textrm{bad}}(nd-n-d) = \emptyset$, it follows that there is some $i \in \{1, ..., r\}$ with $\gcd(\lambda_i u_i, d) =1$ and $n \nmid \lambda_i u_i$. In particular, for such an $i$, we have $\gcd(\lambda_i, d)=1$, $\gcd(u_i, d)=1$, and $n \nmid u_i$.  
This corresponds to an irreducible factor $p_i $ of degree $u_i$ of $g$  with exponent $\lambda_i$ coprime to $d$. We notice that $p_i \nmid f$, since both $f$ and $p_i$ are irreducible and $\deg(p_i) = u_i \neq n = \deg(f)$, since $n \nmid u_i$. Hence, $p_i$ is a monic irreducible factor of $fg$ of exponent $\lambda_i$ coprime to $d$. By Lemma \ref{lemma1}, this implies that $\varphi_{k(p_i)}$ is isotropic. By letting $L:= k(p_i) = k[t]/(p_i(t))$, we see that $[L:k] = u_i$ satisfies $\gcd([L:k], d)=1$ and $n \nmid [L:k]$. Since $\deg(g) \leq nd-n-d$, it also follows immediately, by construction, that $[L:k] \leq nd-n-d$, as required.\\

In order to explicitly compute a set $\frakD$ of possible values for $[L:k]$, we can do as follows. Let $[a_1, ..., a_r]$ be a partition of $nd-n-d$.  For each $a_i$ with $\gcd(a_i, d)=1$, potentially we could have $a_i = u_i \lambda_i$, and thus $u_i$ could be any positive divisor of $a_i$.  Hence, for each $a_i$ with $\gcd(a_i,d)=1$, any positive divisor $\delta$ of $a_i$ with $n \nmid \delta$ is a possible degree $[L:k]$ for the extension constructed above with the required properties, and we append any such $\delta$ to $\frakD$. By considering all the possible partitions of $nd-n-d$ and appeding divisors $\delta$ to $\frakD$ as above, we thus obtain a set $\frakD$ containing, by construction, the value of $[L:k]$.\\

Finally, if we have an explicit non-trivial solution over $K$, then, in the above proof, we also have an explicit $v \in V[t]$, which implies that $w,s$ are also explicit, and thus that $g$ is explicit as well. Then the factorisation $g = g^\ast \prod_{i=1}^r p_i^{\lambda_i} $ into its irreducible factors is also explicit, and we get all its irreducible factors $p_i$ with $\gcd(\deg(p_i), nd)=1$ and $\gcd(\lambda_i,d)=1$; for each such factor, $L = k[t]/(p_i(t))$ is explicitly computed.
\end{proof}

As an immediate corollary, we recover the following well-known result.

\begin{cor}\label{32}  Let $k$ be any field. Let $\varphi $ be a cubic form on a finite-dimensional vector space $V$ over a field $k$.  Suppose that $\varphi_K$ is isotropic for some quadratic extension $K/k$. Then $\varphi$ is already isotropic over $k$.
\end{cor}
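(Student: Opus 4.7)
The plan is to derive this corollary as a direct application of Theorem \ref{main} with $d=3$ and $n=2$. First I would verify the hypotheses: any quadratic extension $K/k$ is automatically simple (take any element of $K \setminus k$ as a generator), and $\gcd(n,d) = \gcd(2,3) = 1$. The remaining hypothesis to check is that $\frakP_{\textrm{bad}}(nd-n-d) = \frakP_{\textrm{bad}}(1) = \emptyset$.

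This last verification is essentially immediate. The only partition of $1$ is $[1]$. Looking at Definition \ref{bad} with $d=3$ and $n=2$: condition (i) fails because $\gcd(1,3) = 1$, and condition (ii) fails because even though $\gcd(1,3)=1$, we have $2 \nmid 1$. Hence $[1]$ is a good partition and $\frakP_{\textrm{bad}}(1) = \emptyset$.

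With the hypotheses of Theorem \ref{main} satisfied, I can apply it to obtain an extension $L/k$ with $\gcd([L:k], 3) = 1$, $2 \nmid [L:k]$, and $[L:k] \leq nd - n - d = 1$. These constraints force $[L:k] = 1$, i.e., $L = k$, and therefore $\varphi$ is isotropic over $k$, as required. There is no real obstacle here; the only thing to guard against is a misreading of the inequality $[L:k] \leq 1$, which (combined with $[L:k] \geq 1$) pins down $L = k$ uniquely.
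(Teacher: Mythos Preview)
Your proposal is correct and follows essentially the same approach as the paper: both apply Theorem \ref{main} with $(d,n)=(3,2)$, observe that $nd-n-d=1$ has only the good partition $[1]$, and conclude that $[L:k]=1$. Your write-up is slightly more explicit (noting that quadratic extensions are automatically simple and spelling out why $[1]$ fails both badness conditions), but the argument is identical.
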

\begin{proof} Here $(d,n)=(3,2)$ and $nd-n-d= 1$. Since the only partition of 1 is $[1]$ and is a good partition, we can apply Theorem \ref{main} to conclude that $\varphi$ is already isotropic over $k$, as required.
\end{proof}

\subsection{Improving Coray's and Ma's results.} We now specialise  Theorem \ref{main} to the case when $(d,n) = (3,4)$. An important application of this is an improvement upon the results by Coray and Ma, as mentioned in the introduction.

\begin{thm}\label{34}
 Let $k$ be a field and let $\varphi$ be a cubic  form  on a finite-dimensional vector space $V $ over $k$.  If there exists a simple extension $K/k$ with  $[K:k] = 4$ such that $\varphi_K$ is isotropic, then there exists a finite extension $L/k$ with $[L:k] \in \{1,5\}$ such that $\varphi_L$ is isotropic. 
\end{thm}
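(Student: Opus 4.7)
The plan is to invoke Theorem~\ref{main} with $(d,n)=(3,4)$ and then prune the resulting set $\frakD$ of possible degrees down to $\{1,5\}$ using Corollary~\ref{32}. With these parameters we have
\[ nd-n-d \;=\; 12-4-3 \;=\; 5, \]
so the first step is to verify the hypothesis $\frakP_{\textrm{bad}}(5)=\emptyset$, i.e.\ that no partition of $5$ is bad in the sense of Definition~\ref{bad}.

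The partitions of $5$ are
\[ [5],\ [4,1],\ [3,2],\ [3,1,1],\ [2,2,1],\ [2,1,1,1],\ [1,1,1,1,1]. \]
I will check each one directly against Definition~\ref{bad}. For condition~(i) I need every part to share a common factor with $3$ (i.e.\ to be divisible by $3$), which already fails for every listed partition except possibly $[3,2]$ and $[3,1,1]$, and those contain the part $2$ or $1$ coprime to $3$. For condition~(ii) I need every part coprime to $3$ to be divisible by $n=4$; a part equal to $1$ or $2$ immediately breaks this, which handles every partition except $[4,1]$, and the part $1$ in $[4,1]$ breaks it there as well. Hence every partition of $5$ is good, so $\frakP_{\textrm{bad}}(5)=\emptyset$.

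Now I apply Theorem~\ref{main}: there exists an extension $L/k$ with $\gcd([L:k],3)=1$, $4\nmid[L:k]$, and $[L:k]\leq 5$, and the explicit set $\frakD$ from the proof of Theorem~\ref{main} is obtained by collecting, for each partition $[a_1,\dots,a_r]$ of $5$ and each $a_i$ with $\gcd(a_i,3)=1$, every positive divisor $\delta$ of $a_i$ with $4\nmid\delta$. Running through the seven partitions above yields $\frakD = \{1,2,5\}$. So a priori $[L:k]\in\{1,2,5\}$.

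The final step is to eliminate the possibility $[L:k]=2$: if $\varphi_L$ is isotropic for some quadratic extension $L/k$, then by Corollary~\ref{32} the form $\varphi$ is already isotropic over $k$, so we may replace $L$ by $k$. Therefore $[L:k]\in\{1,5\}$, as required. The only real content of the argument is the enumeration of partitions of $5$ and the invocation of the two prior results; no step should present an obstacle.
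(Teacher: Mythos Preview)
Your argument is essentially identical to the paper's: verify $\frakP_{\textrm{bad}}(5)=\emptyset$, apply Theorem~\ref{main} to get $[L:k]\in\{1,2,5\}$, and eliminate $2$ via Corollary~\ref{32}. There is a harmless slip in your partition check for condition~(ii): the only partition with no part equal to $1$ or $2$ is $[5]$, not $[4,1]$, so your sentence leaves $[5]$ unaddressed; but $[5]$ is obviously good since $5$ is coprime to $3$ and $4\nmid 5$.
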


\begin{proof} Here $(d,n)=(3,4)$ and $nd-n-d = 5$. Any partition of $5$ having a 1 in it is a good partition. The only partition of 5 not involving a 1 is $[3,2]$, which is also a good partition. Hence, by Theorem \ref{main}, there is some extension $L/k$ with $\varphi_L$ isotropic and $\gcd([L:k], 3)=1$, $4 \nmid [L:k]$, and $[L:k] \leq 5$. This leaves $[L:k] \in \{1, 2, 5\}$ as possibilities. But, by Corollary \ref{32}, if $[L:k] = 2$, then $\varphi$ is already isotropic over $k$. Hence, we can always find some $L/k$ with $\varphi_L$ isotropic and $[L:k] \in \{1,5\}$, as required.
\end{proof}

\subsection{The case when $d$ and $n$ are both prime} When $d$ and $n$ are both prime, we can refine the statement of Theorem \ref{main}. Indeed, when $d$ and $n$ are prime, the partitions of elements $u \in S_{d,n}$ are always good, as the following lemma shows. 

\begin{lemma} \label{admiss} Let $d\geq 3$ and $n \geq 2$ be prime with $\gcd(d, n)=1$. Let $u \in S_{d,n}$ and let  $[a_1,  ..., a_r]$ be a partition of $u$. Then there is some $i \in \{1, ..., r\}$ with $\gcd(a_i, nd)=1$. 
\end{lemma}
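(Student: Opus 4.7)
The plan is to argue by contradiction and reduce the claim to a short calculation modulo $d$. The key observation is that, since $d$ and $n$ are both prime, any $a_i$ with $\gcd(a_i, nd) > 1$ must be divisible by $d$ or by $n$ (or by both). Consequently, the failure of the conclusion is equivalent to writing $u$ as a non-negative integer combination of $d$ and $n$, and the claim then becomes the statement that elements of $S_{d,n}$ lie outside the numerical semigroup generated by $\{d, n\}$ --- essentially a restricted form of the Frobenius coin problem for two coprime denominators.

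Concretely, I would assume for contradiction that $\gcd(a_i, nd) > 1$ for every $i \in \{1, \ldots, r\}$ and split the indices into $I_d := \{i : d \mid a_i\}$ and $I_n := \{i : n \mid a_i \text{ and } d \nmid a_i\}$, so that $\{1, \ldots, r\} = I_d \sqcup I_n$. Setting
\[x := \sum_{i \in I_d} \frac{a_i}{d} \in \Z_{\geq 0}, \qquad y := \sum_{i \in I_n} \frac{a_i}{n} \in \Z_{\geq 0},\]
one then has $u = \sum_i a_i = xd + yn$.

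The contradiction is extracted from a single congruence: by the definition of $S_{d,n}$ we have $u \equiv n^\ast \equiv -n \pmod d$, so reducing $u = xd + yn$ modulo $d$ yields $yn \equiv -n \pmod d$; since $\gcd(n,d) = 1$, this forces $y \equiv -1 \pmod d$, and combined with $y \geq 0$ it gives $y \geq d-1$. Hence $u \geq (d-1)n = n(d-1)$, contradicting the upper bound $u < n(d-1)$ that is built into the definition of $S_{d,n}$.

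There is essentially no hard step here; the only point to be careful about is observing that the target $\gcd(a_i, nd) = 1$ is strictly stronger than the failure of either condition (i) or (ii) of Definition \ref{bad}, so this one lemma simultaneously rules out both kinds of bad partition. This is exactly what will let the primality of $n$ and $d$ upgrade Theorem \ref{main} to the cleaner statement of Corollary \ref{mainprime}.
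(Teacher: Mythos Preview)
Your proof is correct and is essentially the same as the paper's: both assume no $a_i$ is coprime to $nd$, split the index set according to divisibility by $d$ or $n$, reduce modulo $d$ using $u \equiv -n \pmod d$ to force the $n$-part to contribute at least $(d-1)n$, and contradict the bound $u < n(d-1)$. Your $y$ is exactly the paper's $c$, and the extra variable $x$ is harmless bookkeeping that the paper leaves implicit.
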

\begin{proof} We partition  $\{1, ..., r\}$ with the subsets
\[ \begin{array}{ll}
I_n&:=\{ i \in \{1, ..., r\}: n \mid a_i , \ d \nmid a_i\}, \\
 I_d&:=\{ i \in \{1, ..., r\}:  \ d \mid a_i\},\\
 I&:=\{ i \in \{1, ..., r\}: n \nmid a_i, \ d \nmid a_i\}.
\end{array} \]
If we can show that $I \neq \emptyset$, then we are done. We write
\[ u = \sum_{i =1}^r a_i = \sum_{i \in I_n} a_i + \sum_{i \in I_d} a_i +\sum_{i \in I} a_i.\]
Assume, for a contradiction, that $I = \emptyset$. We write $ \sum_{i \in I_n} a_i  = n c$, for some $c \in \Z_{\geq 0}$. Since $u \in S_{d,n}$ and $\gcd(d,n)=1$, we have that $u \equiv -n \bmod d$, which implies that  \( -n \equiv  \sum_{i \in I_n} a_i \equiv nc \bmod d.\) Hence, since $\gcd(d,n)=1$, we have that $c+1 \equiv 0 \bmod d$.
But since $nc \leq u < 	(d-1) n$, it follows that $c < d-1$. Hence, we have on the one hand that $0 \leq c \leq d-2$ and on the other hand that $c+1 \equiv 0 \bmod d$, which is a contradiction. Hence, $I \neq \emptyset$, as required.
 \end{proof}

As a corollary of Theorem \ref{main}, we get the following refinement.
\begin{cor} \label{mainprime} Let $k$ be any field. Let $\varphi $ be a degree $d$ form on a finite-dimensional vector space $V $ over a field $k$, where $d$ is an odd prime.  If $\varphi_K$ is isotropic for some extension $K/k$ with $n:=[K:k]$ prime and $\gcd(n,d)=1$, then $\varphi_L$ is also isotropic for some extension $L/k$ with $\gcd([L:k], nd) = 1$ and $[L:k] \leq nd -n-d$, where a set $\frakD$ of possible values for $[L:k]$ is explicitly computable. Moreover, if a non-trivial $K$-solution for $\varphi_K$  is known explicitly, then $L/k$ can be explicitly computed  as well.   
\end{cor}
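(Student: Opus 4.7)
The statement is essentially a direct corollary of Theorem \ref{main} combined with Lemma \ref{admiss}, and the plan is simply to verify the hypotheses and then strengthen the conclusion. First, I would observe that the simplicity hypothesis on $K/k$ in Theorem \ref{main} is automatic here: if $[K:k] = n$ is prime, then any $\alpha \in K \setminus k$ satisfies $1 < [k(\alpha):k] \mid n$, forcing $k(\alpha) = K$, so $K/k$ is simple.

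Next, I would verify the hypothesis $\frakP_{\textrm{bad}}(nd-n-d) = \emptyset$. Let $[a_1, \ldots, a_r]$ be an arbitrary partition of $nd-n-d$. By Lemma \ref{eqB}, $nd-n-d = \max S_{d,n} \in S_{d,n}$, so by Lemma \ref{admiss} (applicable since $d$ and $n$ are both prime with $\gcd(d,n)=1$) some $a_i$ satisfies $\gcd(a_i, nd) = 1$. This single index $i$ witnesses a violation of both clauses (i) and (ii) of Definition \ref{bad}, since it simultaneously gives $\gcd(a_i, d) = 1$ and $n \nmid a_i$; hence no partition of $nd-n-d$ is bad.

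Applying Theorem \ref{main} then yields an extension $L/k$ with $\varphi_L$ isotropic, $\gcd([L:k], d) = 1$, $n \nmid [L:k]$, and $[L:k] \leq nd-n-d$, together with the stated explicit computability of the set $\frakD$ and of $L$ when a non-trivial $K$-solution is given. To promote $\gcd([L:k], d) = 1$ and $n \nmid [L:k]$ into $\gcd([L:k], nd) = 1$, I would use that $n$ is prime, so $n \nmid [L:k]$ is equivalent to $\gcd([L:k], n) = 1$; combined with $\gcd([L:k], d) = 1$ and $\gcd(n,d) = 1$, multiplicativity of the gcd then gives $\gcd([L:k], nd) = 1$. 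There is no substantive obstacle: the real work has already been done in Lemma \ref{admiss} and Theorem \ref{main}, and the corollary is little more than repackaging.
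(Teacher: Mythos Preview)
Your argument is correct and follows exactly the route the paper takes: invoke Lemma \ref{admiss} to conclude $\frakP_{\textrm{bad}}(nd-n-d) = \emptyset$ and then apply Theorem \ref{main}. You are simply more explicit than the paper about three points it leaves tacit --- that a prime-degree extension is automatically simple, that $nd-n-d \in S_{d,n}$ (via Lemma \ref{eqB}) so Lemma \ref{admiss} applies, and that primality of $n$ upgrades $n \nmid [L:k]$ to $\gcd([L:k], nd)=1$ --- all of which are genuine small gaps in the paper's one-line proof that you have handled correctly.
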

\begin{proof} This follows immediately by Theorem \ref{main}, since $\frakP_{\textrm{bad}}(nd-n-d) = \emptyset$  by Lemma \ref{admiss}.
\end{proof}


\bibliographystyle{alpha}
\bibliography{refs}

\begin{thebibliography}{EKN08}

\bibitem[Cor76]{Coray}
D.~Coray.
\newblock Algebraic points on cubic hypersurfaces.
\newblock {\em Acta Arithmetica}, 30(3):267--296, 1976.

\bibitem[CTS79]{CTS79}
J.-L. Colliot-Th\'{e}l\`{e}ne and J.-J. Sansuc.
\newblock La descente sur les variétés rationnelles.
\newblock {\em in: Journées de Géométrie Algébrique d’Angers, Juillet
  1979/Algebraic Geometry, Angers, 1979, Sijthoff {\&} Noordhoff, Alphen aan
  den Rijn, 1980}, pages 223--237, 1979.

\bibitem[EKN08]{EKM}
R.~S. Elman, Karpenko, and A.~N., Merkurjev.
\newblock {\em The algebraic and geometric theory of quadratic forms},
  volume~56.
\newblock American Mathematical Society, 2008.

\bibitem[Ma21]{Ma}
Q.~Ma.
\newblock Closed points on cubic hypersurfaces.
\newblock {\em Michigan Math. J.}, 70(4):857--868, 2021.

\bibitem[RV21]{RV}
C.~Rivera and B.~Viray.
\newblock Persistence of the {B}rauer-{M}anin obstruction on cubic surfaces.
\newblock 2021.
\newblock Preprint, arXiv:2111.03546.

\bibitem[Spr52]{Springer}
T.~A. Springer.
\newblock Sur les formes quadratiques d'indice z{\'e}ro.
\newblock {\em C. R. Acad. Sci. Paris}, 234:1517--1519, 1952.

\end{thebibliography}

\end{document}